\newtheorem{theorem}{Theorem}
\newtheorem{lemma}{Lemma}
\newtheorem{corollary}{Corollary}
\newtheorem{remark}{Remark}
\newcommand{\raisedot}{\raisebox{2pt}{$.$}}
\newcommand{\raisecomma}{\raisebox{2pt}{$,$}}
\begin{document}

\title{A bound for the error term in the Brent-McMillan algorithm}

\author{
  Richard P. Brent
  \footnote{Mathematical Sciences Institute, 
  Australian National University, Canberra, 
  Australia [\texttt{gamma@rpbrent.com}];
  supported by Australian Research Council grant DP140101417.}\\
  \and
  Fredrik Johansson
  \footnote{
  RISC, Johannes Kepler University, 4040 Linz, Austria
  [\texttt{fredrik.johansson@risc.jku.at}]; supported by the Austrian Science
  Fund (FWF) grant Y464-N18.}
}
\date{}

\maketitle

\begin{abstract}
The Brent-McMillan algorithm B3 (1980), when implemented with binary splitting,
is the fastest known algorithm for high-precision computation of Euler's
constant. However, no rigorous error bound for the algorithm has ever been
published.  We provide such a bound and justify the empirical observations
of Brent and McMillan.  We also give bounds on the error
in the asymptotic expansions of functions related to the Bessel functions
$I_0(x)$ and $K_0(x)$ for positive real~$x$.
\end{abstract}

\section{Introduction}

Brent and McMillan \cite{BrentMcMillan1980,Jackson1996} observed
that Euler's constant
\begin{equation*}
\gamma = \lim_{n \rightarrow \infty } (H_n - \ln(n)) \approx 0.5772156649,
	\quad H_n = \sum_{k=1}^n \frac{1}{k}\,\raisecomma
\end{equation*}
can be computed rapidly to high accuracy using the formula
\begin{equation}
\gamma = \frac{S_0(2n) - K_0(2n)}{I_0(2n)} - \ln(n)\,\raisecomma
\label{eq:bessel}
\end{equation}
where $n > 0$ is a free parameter (understood to be an integer),
$K_0(x)$ and $I_0(x)$ denote the usual Bessel functions, and
\begin{equation*}
S_0(x) = \sum_{k=0}^{\infty} \frac{H_k}{(k!)^2} \left(\frac{x}{2}\right)^{2k}.
\end{equation*}
The idea is to choose $n$ optimally so that an asymptotic
series can be used to compute $K_0(2n)$, while $S_0(2n)$ and $I_0(2n)$
are computed using Taylor series.

When all series are evaluated using the \emph{binary splitting}
technique (see \cite[\S4.9]{BrentZimmermann2010}), 
the first $d$ digits of $\gamma$
can be computed in essentially optimal time $O(d^{1+\varepsilon})$.
This approach has been used for all recent record
calculations of $\gamma$, including the
current world record of 29,844,489,545 digits
set by A. Yee and R. Chan in 2009 \cite{Yee}.

Brent and McMillan gave three algorithms (B1, B2 and B3)
to compute $\gamma$ via~\eqref{eq:bessel}.
The most efficient, B3, approximates
$K_0(2n)$ using the asymptotic expansion
\begin{equation}
2x I_0(x) K_0(x) = \sum_{k=0}^{m/2-1} \frac{b_k}{x^{2k}} + T_m(x)\,, \quad
	b_k = \frac{[(2k)!]^3}{(k!)^4 8^{2k}}\,\raisecomma
\label{eq:ikasymp}
\end{equation}
where one should take $m \approx 4n$.
The expansion \eqref{eq:ikasymp}
appears as formula 9.7.5 in Abramowitz and Stegun \cite{AbramowitzStegun1964},
and 10.40.6 in the Digital Library of Mathematical Functions \cite{DLMF}.
Unfortunately, neither work gives a proof or reference, and no bound
for the error term $T_m(x)$ is provided. Brent and McMillan
observed empirically that $T_{4n}(2n) = O(e^{-4n})$, which
would give a final error of $O(e^{-8n})$ for~$\gamma$,
but left this as a conjecture.

Brent \cite{Brent2010} recently noted that the error
term can be bounded rigorously,
starting from the individual asymptotic expansions of $I_0(x)$ and $K_0(x)$.
However, he did not present an explicit bound at that time.
In this paper, we calculate an explicit error bound,
allowing the fastest version of the Brent-McMillan algorithm (B3) to be used
for provably correct evaluation of $\gamma$.

To bound the error in the Brent-McMillan algorithm we must bound the
errors in evaluating the transcendental functions $I_0(2n)$, $K_0(2n)$ and
$S_0(2n)$ occurring in~\eqref{eq:bessel} (we ignore the
error in evaluating $\ln(n)$ since this is well-understood). The most
difficult task is to bound the error associated with $K_0(2n)$. For
reasons of efficiency, the algorithm approximates
$I_0(2n)K_0(2n)$ using the asymptotic expansion~\eqref{eq:ikasymp},
and then the term $K_0(2n)/I_0(2n)$ in~\eqref{eq:bessel} is computed from
$I_0(2n)K_0(2n)/I_0(2n)^2$.

Sections~\ref{sec:bounds}--\ref{sec:product} contain
bounds on the size of various error terms
that are needed for the main result.
For example, Lemma~\ref{lemma:Q_mbound} bounds the error in the asymptotic
expansion for $I_0(x)$, which is nontrivial as the terms do not have
alternating signs.

The asymptotic expansion~\eqref{eq:ikasymp} can be obtained formally by
multiplying the asymptotic
expansions (see~\eqref{eq:k0series}--\eqref{eq:i0series} below) for
$K_0$ and $I_0$. To obtain $m$ terms in the asymptotic expansion, we multiply
the polynomials $P_m(-1/z)$ and $P_m(1/z)$ occurring
in~\eqref{eq:k0series}--\eqref{eq:i0series}, then discard half the terms
(here $z = 1/x$ is small when $x \approx 2n$ is large, so we discard the
terms involving high powers of $z$). To bound the error,
we show in Lemma~\ref{lem:T_4n_bd} 
that the discarded terms are sufficiently small, and also take into
account the error terms $R_m$ and $Q_m$ in the asymptotic expansions for
$K_0$ and $I_0$.

The main result, Theorem~\ref{thm:Jbound}, is given in
Section~\ref{sec:complete}.
Provided the parameter $N$ (the number of terms used to approximate
$S_0(2n)$ and $I_0(2n)$) is sufficiently large, the error is bounded
by $24e^{-8n}$.  Corollary~\ref{cor:simpler_bound} shows that it is sufficient
to take $N \approx 4.971n$.

\section{Bounds for the individual Bessel functions}	\label{sec:bounds}

Asymptotic expansions for $I_0(x)$ and $K_0(x)$
are given by Olver~\cite[pp.~266--269]{Olver1997} and
can be found in~\cite[\S10.40]{DLMF}.
They can be written as
\begin{equation}
K_0(x) =  e^{-x} \left(\frac{\pi}{2 x}\right)^{1/2} \left( P_m(-x) + R_m(x) \right)
\label{eq:k0series}
\end{equation}
and
\begin{equation}
I_0(x) = \frac{e^x}{(2 \pi x)^{1/2}} \left( P_m(x) + Q_m(x) \right),
\label{eq:i0series}
\end{equation}
where $R_m(x)$ and $Q_m(x)$ denote error terms,
\begin{equation}
P_m(x) = \sum_{k=0}^{m-1} a_k x^{-k}, 
	\;\;\text{and}\;\;
	a_k = \frac{[(2k)!]^2}{(k!)^3 32^k}\,\raisedot
\label{eq:pseries}
\end{equation}

For $n \ge 1$,
\begin{equation}				\label{eq:factorial_bounds}
\sqrt{2\pi} n^{n+1/2} e^{-n} \le n! \le e n^{n+1/2} e^{-n},
\end{equation}
so the coefficients $a_k$ in \eqref{eq:pseries} satisfy
\begin{equation}
a_k \le \frac{e^2}{\pi^{3/2} 2^{1/2}} \frac{1}{k^{1/2}} \left(\frac{k}{2e}\right)^k
< \frac{1}{k^{1/2}} \left(\frac{k}{2e}\right)^k
\label{eq:coeffbound}
\end{equation}
for $k \ge 1$ (the first term is $a_0 = 1$).

For $x > 0$, we also have the global bounds
\begin{equation}
0 < K_0(x) < e^{-x} \left(\frac{\pi}{2 x}\right)^{1/2}
\label{eq:globalbound1}
\end{equation}
and
\begin{equation}
I_0(x) > \frac{e^x}{(2 \pi x)^{1/2}}\,\raisedot
\label{eq:globalbound2}
\end{equation}

Observe that the bound on $K_0(x)$ and equation~\eqref{eq:k0series} imply that
\begin{equation}			\label{eq:PR_ineq}
|P_m(-x) + R_m(x)| < 1.
\end{equation}

For $x > 0$, the series~\eqref{eq:k0series} for $K_0(x)$
is alternating, and the remainder satisfies
\begin{equation}				\label{eq:R_mbound}
|R_m(x)| \le \frac{a_m}{x^m}
	< \frac{1}{m^{1/2}} \left(\frac{m}{2e}\right)^m
		\frac{1}{x^m}\,\raisedot
\end{equation}

The series~\eqref{eq:i0series} for $I_0(x)$ is not alternating.
The following lemma bounds the error $Q_m(x)$.
\begin{lemma}		\label{lemma:Q_mbound}
Let $Q_m(x)$ be defined by~\eqref{eq:i0series}. Then for $m \ge 1$ and
real $x \ge 2$ we have
\[|Q_m(x)| \le 4\left(\frac{m}{2ex}\right)^m + e^{-2x}.\]
\end{lemma}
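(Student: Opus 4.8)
The plan is to realise $Q_m(x)$ as an honest integral remainder and then split off the two contributions in the claimed bound. I would start from the Laplace-type representation $I_0(x)=\tfrac1\pi\int_0^\pi e^{x\cos\theta}\,d\theta$ and substitute $\tau=1-\cos\theta$, which gives
\[ I_0(x)=\frac{e^x}{\pi\sqrt2}\int_0^2 e^{-x\tau}\tau^{-1/2}(1-\tau/2)^{-1/2}\,d\tau . \]
Comparing with \eqref{eq:i0series} and simplifying the constant to $\sqrt{x/\pi}$, this reads $P_m(x)+Q_m(x)=\sqrt{x/\pi}\int_0^2 e^{-x\tau}\tau^{-1/2}(1-\tau/2)^{-1/2}\,d\tau$. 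Writing $(1-\tau/2)^{-1/2}=\sum_{k=0}^{m-1}c_k\tau^k+\rho_m(\tau)$ with $c_k=\binom{2k}{k}8^{-k}$, a short Gamma-function computation shows that integrating each monomial over $[0,\infty)$ reproduces the coefficient $a_k$ of $P_m$ exactly (indeed $a_k=c_k\Gamma(k+\tfrac12)/\sqrt\pi$). Hence $Q_m(x)=B-A$, where $B=\sqrt{x/\pi}\int_0^2 e^{-x\tau}\tau^{-1/2}\rho_m(\tau)\,d\tau$ is the genuine truncation error and $A=\sqrt{x/\pi}\sum_{k=0}^{m-1}c_k\int_2^\infty e^{-x\tau}\tau^{k-1/2}\,d\tau$ collects the tails lost when the ranges were extended to infinity.

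The two pieces match the two terms of the bound. For $A$, every integral runs over $\tau\ge2$, so $e^{-x\tau}\le e^{-2x}e^{-x(\tau-2)}$ pulls out the factor $e^{-2x}$ and what remains is a finite sum to be controlled uniformly in $m$; this is the source of the $e^{-2x}$ term. For $B$ the crucial estimate is $\rho_m(\tau)\le(\tau/2)^m(1-\tau/2)^{-1/2}$ on $[0,2)$, which follows termwise from $c_k/c_{k-1}=(2k-1)/(4k)<\tfrac12$, giving $c_k\le2^{-m}c_{k-m}$. This reduces $B$ to $2^{-m}\sqrt{x/\pi}\int_0^2 e^{-x\tau}\tau^{m-1/2}(1-\tau/2)^{-1/2}\,d\tau$, an integral whose only singularity, at $\tau=2$, is the integrable square-root type; estimating it against $\Gamma(m+\tfrac12)x^{-m-1/2}$ and invoking the factorial bounds behind \eqref{eq:coeffbound} should deliver the $4(m/2ex)^m$ term.

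The hard part will be bounding $B$ in the regime $m\approx2x$, which is precisely the regime $m\approx4n$, $x=2n$ used in algorithm~B3. There the Laplace peak of $e^{-x\tau}\tau^{m-1/2}$ sits at $\tau=m/x\approx2$, i.e.\ right on top of the branch point, so the factor $(1-\tau/2)^{-1/2}$ is no longer damped by the exponential and a naive Watson's-lemma estimate is too lossy to reach a clean constant; I expect to split the range near $\tau=2$ and treat the resulting boundary layer separately, and pinning down the explicit constant $4$ there is where the real effort lies. As alternative bookkeeping, the connection formula $K_0(xe^{\pm i\pi})=K_0(x)\mp\pi i\,I_0(x)$ identifies $Q_m(x)$ with $\operatorname{Re}R_m(xe^{i\pi})$, reducing the whole lemma to a bound on the $K_0$ remainder at argument $\pi$; in this picture the $e^{-2x}$ term appears as the Stokes contribution of the recessive solution $K_0(x)\sim e^{-x}$, while the anti-Stokes estimate of that remainder—where the alternating-series argument behind \eqref{eq:R_mbound} fails—remains the essential difficulty.
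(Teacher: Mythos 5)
Your setup is sound---the Laplace representation, the identification $a_k=c_k\,\Gamma(k+\tfrac12)/\sqrt{\pi}$, and the termwise bound $\rho_m(\tau)\le(\tau/2)^m(1-\tau/2)^{-1/2}$ (from $c_k/c_{k-1}=(2k-1)/(4k)<\tfrac12$) are all correct---but the proof has a genuine gap: the estimate you defer \emph{is} the lemma, and the two steps you sketch for $B$ provably cannot deliver the stated constant in exactly the regime that matters. The comparison of $\int_0^2 e^{-x\tau}\tau^{m-1/2}(1-\tau/2)^{-1/2}\,d\tau$ against $\Gamma(m+\tfrac12)x^{-m-1/2}$ is not even an upper bound when $m\approx 2x$: the singular factor is $\ge 1$ everywhere, and substituting $\tau=2(1-u)$ shows the mass concentrates in a window $u\sim x^{-1/2}$ at the branch point, where $\int_0^\infty e^{-xu^2}u^{-1/2}\,du\sim x^{-1/4}$ makes the integral of order $2^m e^{-2x}x^{-1/4}$, whereas $\Gamma(m+\tfrac12)x^{-m-1/2}\sim\sqrt{2\pi}\,2^m e^{-2x}x^{-1/2}$ is smaller by a factor $x^{1/4}$. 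Consequently, even granting your pointwise bound on $\rho_m$, the best conclusion available is $B=O(e^{-2x}x^{1/4})$ at $m=2x$, which exceeds the target $4(m/2ex)^m=4e^{-2x}$ for large $x$; the geometric bound $c_k\le 2^{-m}c_{k-m}$ throws away the $k^{-1/2}$ decay of the $c_k$, and that is exactly where a factor of order $m^{1/4}$ is lost. Since Algorithm B3 uses $x=2n$, $m=4n$, i.e.\ $m=2x$ precisely, this is not a degenerate corner but the only case the application needs. The bookkeeping of $A$ also fails in the opposite regime: for $m>2x$ the top terms $c_k\int_2^\infty e^{-x\tau}\tau^{k-1/2}\,d\tau$ with $k$ near $m$ are of order $(k/2ex)^k$, not $O(e^{-2x})$ uniformly in $m$, so ``the two pieces match the two terms of the bound'' is not correct as stated.

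The paper avoids the boundary-layer analysis entirely. From $I_0(x)=i\bigl(K_0(-x)-K_0(x)\bigr)/\pi$ it obtains $Q_m(x)=R_m(-x)-\tfrac{i}{\pi}(2\pi x)^{1/2}e^{-x}K_0(x)$; the second term is at most $e^{-2x}$ on the nose, since $(2\pi x)^{1/2}(\pi/2x)^{1/2}=\pi$ combined with \eqref{eq:globalbound1}, and for the rotated remainder it simply cites Olver's published bound $|R_m(-x)|\le 2\chi(m)\exp(\pi/(8x))\,a_m x^{-m}$ with $\chi(m)\le\tfrac{\pi}{2}m^{1/2}$. That factor $m^{1/2}$ is precisely the price of the peak--branch-point confluence you identified, and it is cancelled by the $m^{-1/2}$ in the coefficient bound \eqref{eq:coeffbound}, giving $\pi e^{\pi/16}(m/2ex)^m<4(m/2ex)^m$ for $x\ge 2$. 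Your closing observation about the connection formula $K_0(xe^{\pm i\pi})$ is exactly this route; what is missing from your write-up is not a new idea but the one quantitative input, which you could legitimately import by citation (Olver, \emph{Asymptotics and Special Functions}, p.~269) rather than re-derive. If you insist on completing the integral-representation argument instead, you must replace the geometric bound on $\rho_m$ by one retaining the $k^{-1/2}$ decay and split the range at $u\asymp 1/m$; a $\chi(m)$-type factor of order $m^{1/2}$ then re-emerges and has to be absorbed exactly as in the paper, so nothing is saved over the citation.
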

\begin{proof}
The identity	
$I_0(x) = i(K_0(-x) - K_0(x))/\pi$
gives
\begin{equation}
Q_m(x) = R_m(-x) - \frac{i}{\pi} \frac{(2\pi x)^{1/2}}{e^x} K_0(x).
\end{equation}

According to Olver~\cite[p.~269]{Olver1997},
\begin{equation}
|R_m(-x)| \le 2 \chi(m) \exp(\tfrac{1}{8} \pi x^{-1}) a_m x^{-m},
\end{equation}
where
\begin{equation}
\chi(m) = \pi^{1/2} \frac{\Gamma(m/2+1)}{\Gamma(m/2+1/2)}
	\le \frac{\pi}{2}\,m^{1/2}
\end{equation}
(the bound on $\chi(m)$ follows as $\chi(m)/m^{1/2}$ is monotonic decreasing
for $m \ge 1$).

Since $x \ge 2$, applying \eqref{eq:coeffbound} gives
\begin{equation}
|R_m(-x)| \le \pi e^{\pi/16} \left(\frac{m}{2e}\right)^m \frac{1}{x^m}
	< 4 \left(\frac{m}{2ex}\right)^m.
\end{equation}

Combined with the global bound \eqref{eq:globalbound1} for $K_0(x)$, we obtain
\begin{equation}		\label{eq:Q_m_bd}
|Q_m(x)| \le |R_m(-x)| + \frac{1}{\pi} \frac{(2\pi x)^{1/2}}{e^x} K_0(x)
 \le 4 \left(\frac{m}{2ex}\right)^m + e^{-2x}.
\end{equation}
\end{proof}

\pagebreak[3]

\begin{corollary}			\label{cor:IK_upper_bd}
For $x \ge 2$, we have $0 < I_0(x)K_0(x) < 1/x$.
\end{corollary}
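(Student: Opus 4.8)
The plan is to read off the product $I_0(x)K_0(x)$ directly from the two asymptotic expansions \eqref{eq:k0series} and \eqref{eq:i0series}, specialized to the cheapest nontrivial truncation $m=1$, and then control the two resulting error terms using the bounds already established. Positivity is immediate: $I_0(x)>0$ and $K_0(x)>0$ for $x>0$ by \eqref{eq:globalbound2} and \eqref{eq:globalbound1}, so $0<I_0(x)K_0(x)$.

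For the upper bound, first I would take $m=1$ in \eqref{eq:pseries}, so that $P_1(\pm x)=a_0=1$. Multiplying \eqref{eq:k0series} by \eqref{eq:i0series} then cancels the factors $e^{x}$ and $e^{-x}$, while the remaining constants combine as $(2\pi x)^{-1/2}(\pi/(2x))^{1/2}=1/(2x)$, giving
\[
I_0(x)K_0(x)=\frac{1}{2x}\bigl(1+Q_1(x)\bigr)\bigl(1+R_1(x)\bigr).
\]
Thus the claim $I_0(x)K_0(x)<1/x$ is equivalent to $\bigl(1+Q_1(x)\bigr)\bigl(1+R_1(x)\bigr)<2$.

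Next I would bound the two error terms at $m=1$ and $x\ge 2$. From \eqref{eq:R_mbound} we obtain a small bound on $|R_1(x)|$, and Lemma~\ref{lemma:Q_mbound} gives $|Q_1(x)|\le 4/(2ex)+e^{-2x}$; both bounds are decreasing in $x$, so it suffices to evaluate them at $x=2$, where each sits comfortably below $1$. Since $|Q_1(x)|<1$ and $|R_1(x)|<1$, both factors $1+Q_1(x)$ and $1+R_1(x)$ are positive, whence
\[
\bigl(1+Q_1(x)\bigr)\bigl(1+R_1(x)\bigr)\le\bigl(1+|Q_1(x)|\bigr)\bigl(1+|R_1(x)|\bigr).
\]
The main (and essentially only) obstacle is the final numerical check that this last product stays below $2$ for all $x\ge 2$; by the monotonicity just noted it reduces to a single evaluation at $x=2$, which leaves ample room to spare. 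This establishes both inequalities and completes the proof.
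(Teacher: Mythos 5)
Your proposal is correct and takes essentially the same approach as the paper: both truncate the expansions at $m=1$, invoke Lemma~\ref{lemma:Q_mbound} to control $Q_1(x)$, and reduce the claim to a numerical check whose worst case is $x=2$. The only inessential difference is that the paper handles the $K_0$ factor via the global bound \eqref{eq:globalbound1} rather than expanding it with the remainder $R_1$, which removes the need for \eqref{eq:R_mbound} and yields the slightly cleaner estimate $I_0(x)K_0(x)\le(1+e^{-1}+e^{-4})/(2x)<1/x$.
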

\begin{proof}
The first inequality is obvious, since both $I_0(x)$ and $K_0(x)$
are positive.
Also, using~\eqref{eq:i0series} and~\eqref{eq:Q_m_bd} with $m=1$ gives
\[I_0(x) \le \frac{e^x}{(2\pi x)^{1/2}}(1 + e^{-1} + e^{-4}),\]
so from \eqref{eq:globalbound1} we have
\[I_0(x)K_0(x) \le \frac{1 + e^{-1} + e^{-4}}{2x} < \frac{1}{x}\,\raisedot\]
\end{proof}

\begin{lemma}					\label{lemma:R_and_Sbound}
If $R_m(x)$ and $Q_m(x)$ are defined by~\eqref{eq:k0series}
and~\eqref{eq:i0series} respectively, then
\begin{equation}
|R_{4n}(2n)| \le \frac{e^{-4n}}{2 n^{1/2}}
\;\;\text{and}\;\; |Q_{4n}(2n)| \le 5 e^{-4n}.
\label{eq:rsevalbounds}
\end{equation}
\end{lemma}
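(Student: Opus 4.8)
The plan is to obtain both estimates by specializing the general bounds already in hand to the particular values $m = 4n$ and $x = 2n$; the content of the lemma is essentially that with this choice the awkward factor $(m/(2ex))^m$ collapses to a clean exponential. Throughout I assume $n \ge 1$, so that $x = 2n \ge 2$ and $m = 4n \ge 1$, which is exactly the range required by \eqref{eq:R_mbound} and by Lemma~\ref{lemma:Q_mbound}.

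First I would treat $R_{4n}(2n)$. Setting $m = 4n$ and $x = 2n$ in \eqref{eq:R_mbound} gives $|R_{4n}(2n)| < (4n)^{-1/2}\bigl(4n/(2e)\bigr)^{4n}(2n)^{-4n}$. The key simplification is the cancellation inside the power: $\bigl(4n/(2e)\bigr)^{4n}(2n)^{-4n} = \bigl(4n/(4en)\bigr)^{4n} = e^{-4n}$. Since $(4n)^{-1/2} = 1/(2n^{1/2})$, the estimate reduces at once to $|R_{4n}(2n)| < e^{-4n}/(2n^{1/2})$, which is the first claimed bound.

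Next I would treat $Q_{4n}(2n)$ using Lemma~\ref{lemma:Q_mbound}, which is applicable because $x = 2n \ge 2$. Substituting $m = 4n$ and $x = 2n$ yields $|Q_{4n}(2n)| \le 4\bigl(4n/(2e\cdot 2n)\bigr)^{4n} + e^{-2\cdot 2n}$. Exactly the same cancellation as before turns the first term into $4e^{-4n}$, while the second term is $e^{-4n}$; adding these gives $|Q_{4n}(2n)| \le 5e^{-4n}$, the second claimed bound.

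In short, there is no genuine obstacle here: the lemma is a direct corollary of \eqref{eq:R_mbound} and Lemma~\ref{lemma:Q_mbound}, and the only nontrivial observation is the algebraic identity $\bigl(m/(2ex)\bigr)^m = e^{-4n}$ at $m = 4n$, $x = 2n$. The one point to check carefully is the hypothesis range, namely that $n \ge 1$ ensures $x \ge 2$ so that Lemma~\ref{lemma:Q_mbound} may be invoked; everything else is routine simplification.
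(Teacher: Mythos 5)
Your proof is correct and follows exactly the paper's argument: the paper's proof is the one-line observation that setting $x = 2n$, $m = 4n$ in \eqref{eq:R_mbound} and in Lemma~\ref{lemma:Q_mbound} yields the two bounds, and you have simply carried out the same substitutions with the cancellation $\bigl(4n/(2e)\bigr)^{4n}(2n)^{-4n} = e^{-4n}$ made explicit. Nothing is missing; the applicability check $x = 2n \ge 2$ for $n \ge 1$ is the only hypothesis to verify, and you verified it.
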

\begin{proof}
Taking $x = 2n$ and $m = 4n$, the inequality~\eqref{eq:R_mbound}
gives the first inequality, and 
Lemma~\ref{lemma:Q_mbound} gives the second inequality.
\end{proof}

We also need the following lemma.
\begin{lemma}
If $P_m(x)$ is defined by~\eqref{eq:pseries}, then
\begin{equation}
|P_{4n}(2n)| < 2\;\;\text{and}\;\; |P_{4n}(-2n)| < 1.
\label{eq:pnbounds}
\end{equation}
\end{lemma}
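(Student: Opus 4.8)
The plan is to treat the two inequalities separately, since $P_{4n}(2n)$ is a sum of positive terms whereas $P_{4n}(-2n)$ is alternating. For the first bound I would write $P_{4n}(2n) = \sum_{k=0}^{4n-1} a_k (2n)^{-k}$, where every summand is positive, and isolate the $k=0$ term, which equals $a_0 = 1$. For $k \ge 1$ the coefficient bound \eqref{eq:coeffbound} gives $a_k (2n)^{-k} < k^{-1/2}\bigl(k/(4en)\bigr)^k$. Since $1 \le k \le 4n-1 < 4n$ forces $k/(4en) < 1/e$, each such term is smaller than $e^{-k}$, so the tail is dominated by the geometric series $\sum_{k \ge 1} e^{-k} = 1/(e-1) < 1$. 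This yields $P_{4n}(2n) < 1 + 1/(e-1) < 2$, as required.

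For the second bound the key is to exploit the alternating structure: write $P_{4n}(-2n) = \sum_{k=0}^{4n-1} (-1)^k c_k$ with $c_k = a_k (2n)^{-k} > 0$. If the finite sequence $c_0, c_1, \ldots, c_{4n-1}$ is strictly decreasing, then the usual bracketing of alternating sums (grouping the $4n$ terms in consecutive pairs from the left shows the sum is positive, while peeling off $c_0$ and grouping the remainder shows the sum is below $c_0$) gives $0 < P_{4n}(-2n) < c_0 = 1$, hence $|P_{4n}(-2n)| < 1$. To check monotonicity I would compute the ratio of consecutive terms. Using
\[
\frac{a_{k+1}}{a_k} = \frac{(2k+1)^2}{8(k+1)},
\]
one finds $c_{k+1}/c_k = (2k+1)^2 / (16n(k+1))$, so the claim reduces to showing that $f(k) := 16n(k+1) - (2k+1)^2 > 0$ for $0 \le k \le 4n - 2$.

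The main obstacle is verifying this last inequality uniformly over the whole range, because near the upper end $k \approx 4n$ both $16n(k+1)$ and $(2k+1)^2$ are of order $64n^2$ and the gap between them is comparatively small, so a naive term-by-term estimate is not attractive. I would instead observe that $f$ is a concave quadratic in $k$ (the coefficient of $k^2$ is $-4$), so it suffices to confirm positivity at the two endpoints: $f(0) = 16n - 1 > 0$ and $f(4n-2) = 32n - 9 > 0$ for all $n \ge 1$. Concavity then forces $f(k) > 0$ throughout the interval $[0, 4n-2]$, which establishes that $c_k$ is strictly decreasing and completes both bounds.
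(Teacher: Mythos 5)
Your proof is correct. For the first inequality you follow exactly the paper's route: isolate the $k=0$ term $a_0=1$, use \eqref{eq:coeffbound} together with $k/(4en)<1/e$ for $1\le k\le 4n-1$ to dominate each tail term by $e^{-k}$, and sum the geometric series to get $P_{4n}(2n) < 1 + 1/(e-1) < 2$. For the second inequality the paper gives no details at all---it only remarks that the bound ``can be proved in a similar manner, taking the sign alternations into account''---so your contribution is to make that sketch rigorous, and you do it correctly. Note that the crude bounds $c_k \le e^{-k}$ from the first part cannot by themselves yield $|P_{4n}(-2n)| < 1$: for large $n$ the first negative term $c_1 = 1/(16n)$ is far smaller than the fixed bound $\sum_{k\ge 2,\, k \text{ even}} e^{-k} = e^{-2}/(1-e^{-2}) \approx 0.157$ on the positive part of the tail, so genuine cancellation---i.e.\ monotone decrease of the actual terms---really is needed, which is what your argument supplies. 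Your ratio $a_{k+1}/a_k = (2k+1)^2/(8(k+1))$ checks out, as does $c_{k+1}/c_k = (2k+1)^2/(16n(k+1))$, and the concavity device---$f(k) = 16n(k+1)-(2k+1)^2$ is a concave quadratic with $f(0)=16n-1>0$ and $f(4n-2)=32n-9>0$, hence positive on the whole range---cleanly handles the delicate upper end $k\approx 4n$, which is exactly where the needed monotonicity is tightest. Since there are $4n$ (an even number of) strictly decreasing positive terms, your bracketing gives $0 < P_{4n}(-2n) < c_0 = 1$ as claimed. In short: part one matches the paper verbatim, and part two is a complete and correct filling-in of a step the paper leaves to the reader, in the same spirit as the paper's own ratio argument $a_i a_j/(a_{i+1}a_{j-1}) \ge 1$ in the proof of Lemma~\ref{lem:T_4n_bd}.
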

\begin{proof}
Using~\eqref{eq:pseries} and~\eqref{eq:coeffbound}, we have
\begin{eqnarray*}
P_{4n}(2n) &=& 1 + \sum_{k=1}^{4n-1}\frac{a_k}{(2n)^k}\\
	   &\le& 1 + \sum_{k=1}^{4n-1}k^{-1/2}\left(\frac{k}{4en}\right)^k\\
	   &\le& 1 + \sum_{k=1}^{4n-1}e^{-k} < \frac{e}{e-1} < 2.
\end{eqnarray*}
The right inequality in \eqref{eq:pnbounds}
can be proved in a similar manner, taking the sign alternations
into account.
\end{proof}

\section{Bounds for the product}		\label{sec:product}

We wish to bound the error term $T_m(x)$ in \eqref{eq:ikasymp}
when evaluated at $x = 2n$, $m = 4n$. The result is given by the 
following lemma.
\begin{lemma}			\label{lem:T_4n_bd}
If $T_m(x)$ is defined by~$\eqref{eq:ikasymp}$, then
$T_{4n}(2n) < 7e^{-4n}$.
\end{lemma}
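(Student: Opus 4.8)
The plan is to start from the defining relation for $T_m(x)$, which expresses it as the difference between $2xI_0(x)K_0(x)$ and its truncated asymptotic series $\sum_{k=0}^{m/2-1} b_k x^{-2k}$. The key observation highlighted in the introduction is that this series arises formally by multiplying $P_m(-x)+R_m(x)$ and $P_m(x)+Q_m(x)$ and keeping only the low powers of $z=1/x$. So my first step would be to substitute the expansions \eqref{eq:k0series} and \eqref{eq:i0series} into $2xI_0(x)K_0(x)$, giving
\[
2xI_0(x)K_0(x) = \bigl(P_m(-x)+R_m(x)\bigr)\bigl(P_m(x)+Q_m(x)\bigr),
\]
where the $e^{\pm x}$ and $(2\pi x)^{\pm 1/2}$ prefactors cancel cleanly (leaving the factor $2x$ absorbed by $\frac{\pi}{2x}\cdot\frac{1}{2\pi x}$ appropriately). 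Expanding the product yields four groups: the main term $P_m(-x)P_m(x)$, and three error groups $P_m(-x)Q_m(x)$, $R_m(x)P_m(x)$, and $R_m(x)Q_m(x)$.

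Next I would analyse the product of polynomials $P_m(-x)P_m(x) = \sum_{j} c_j x^{-j}$, which is a polynomial in $z=1/x$ of degree $2(m-1)$. The coefficients $c_j$ for $j$ even and $j\le m-2$ are exactly the $b_k$ appearing in \eqref{eq:ikasymp} (with $j=2k$); the odd-power coefficients vanish by the sign alternation; and the high-order terms with $j\ge m$ are precisely the discarded terms. Therefore
\[
T_m(x) = \Bigl(\text{discarded tail of } P_m(-x)P_m(x)\Bigr) + P_m(-x)Q_m(x) + R_m(x)P_m(x) + R_m(x)Q_m(x).
\]
Specialising to $x=2n$, $m=4n$, I would bound each piece using the results already established: the factors $|P_{4n}(\pm 2n)|$ by \eqref{eq:pnbounds}, the remainders $|R_{4n}(2n)|\le \tfrac{1}{2}n^{-1/2}e^{-4n}$ and $|Q_{4n}(2n)|\le 5e^{-4n}$ by Lemma~\ref{lemma:R_and_Sbound}. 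These three mixed groups contribute something of order $2\cdot 5 e^{-4n} + 1\cdot\tfrac12 n^{-1/2} e^{-4n} + \text{(quadratically small)}$, comfortably under control.

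The main obstacle will be bounding the discarded tail of $P_m(-x)P_m(x)$, since this is a genuine new estimate rather than a citation of a prior lemma. I would write the tail as $\sum_{j\ge m} c_j x^{-j}$ where $c_j = \sum_{k} a_k a_{j-k}(-1)^k$ ranges over products of the coefficients in \eqref{eq:pseries}, and bound $|c_j|$ by $\sum_k a_k a_{j-k}$ using the coefficient estimate \eqref{eq:coeffbound}. The delicate point is that the summand $(k/2e)^k\bigl((j-k)/2e\bigr)^{j-k}$ is largest near the endpoints of the convolution range, so I would bound it by its maximal value times the number of terms, then evaluate the resulting geometric-type series at $x=2n$, $j\ge 4n$, to show the whole tail is $O(e^{-4n})$ with a small constant. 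Assembling the four contributions and checking that their sum stays below $7e^{-4n}$ completes the argument; the arithmetic must be done carefully to keep the final constant under the stated bound, but no individual estimate is conceptually hard once the product is expanded.
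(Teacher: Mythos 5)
Your overall decomposition is the same as the paper's: expand $2xI_0(x)K_0(x)=(P_m(-x)+R_m(x))(P_m(x)+Q_m(x))$, dispose of the cross terms via \eqref{eq:pnbounds} and Lemma~\ref{lemma:R_and_Sbound} (the paper groups $(P_m(-x)+R_m(x))Q_m(x)$ and uses \eqref{eq:PR_ineq}, arriving at the same $6e^{-4n}$ budget you get), and reduce everything to the high-order tail of $P_m(x)P_m(-x)$. But there is a genuine gap at the sentence ``the coefficients $c_j$ for $j$ even and $j\le m-2$ are exactly the $b_k$ appearing in \eqref{eq:ikasymp}.'' That is not a bookkeeping observation; it is the assertion
\[
\sum_{j=0}^{2k} \frac{(-1)^j [(2j)!]^2 [(4k-2j)!]^2}{(j!)^3 [(2k-j)!]^3\, 32^{2k}} \;=\; \frac{[(2k)!]^3}{(k!)^4\, 8^{2k}}\,\raisecomma
\]
a nontrivial hypergeometric identity, and it cannot be taken from the literature: the paper stresses that \eqref{eq:ikasymp} appears in Abramowitz--Stegun and the DLMF \emph{without proof}, and $T_m(x)$ is defined relative to the explicit $b_k=[(2k)!]^3/((k!)^4 8^{2k})$. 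Unless you prove this identity, your ``discarded tail plus three error groups'' is not all of $T_m(x)$. The paper proves it by creative telescoping (Wilf--Zeilberger, via Koutschan's \texttt{HolonomicFunctions}), obtaining the recurrence $(8+8k)b_{k+1}=(1+6k+12k^2+8k^3)b_k$ satisfied by both sides together with a certificate; some such argument (WZ, or a hypergeometric/Legendre-function identity) is an essential missing ingredient of your proof.

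A second, quantitative caution concerns your tail estimate. You drop the signs, bounding $|c_j|\le\sum_k a_k a_{j-k}$, and then use ``maximal value times the number of terms.'' Your endpoint claim is correct (by the same ratio $a_ia_j/(a_{i+1}a_{j-1})=(i+1)(2j-1)^2/(j(2i+1)^2)\ge 1$ that the paper uses), but note what the paper buys from the alternation: $|c_j|\le 2a_{j-m+1}a_{m-1}$ with \emph{no} count factor, which is then summed crudely over $j$ at a cost of another factor $m-1$. Your count factor $\approx 4n$ per coefficient means you cannot afford a second crude ``count times max'' at the outer summation: that would give roughly $\tfrac14(4n-1)^{1/2}e^{-4n}$, exceeding the available budget of $e^{-4n}$ already for $n\ge 5$. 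Your route survives only if the sum over $j$ is evaluated using the genuine decay of the terms --- the ratio $(3+2j-8n)^2/(16n(2+j-4n))$ is $O(1/n)$ for $j$ near $4n$, so the weighted sum is dominated by its first term $(m-1)a_1a_{m-1}x^{-m}\le\tfrac14(4n-1)^{-1/2}e^{-4n}$ --- and beware that the series is not geometric with ratio bounded away from $1$ over the whole range $j\le 8n-2$ (the ratio tends to $1$ at the far end, though those terms are by then of size $O(e^{-8n})$). So this part of your plan is repairable with the careful arithmetic you anticipate, but as sketched it is on the edge, whereas the missing identity above is a conceptual hole.
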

\begin{proof}
In terms of the
expansions for $I_0(x)$ and $K_0(x)$, we have
\begin{eqnarray}
2x I_0(x) K_0(x) &=& (P_m(-x) + R_m(x)) (P_m(x) + Q_m(x))\nonumber \\
	&=& P_m(x) P_m(-x) +\nonumber \\
	&& \left[(P_m(-x)+R_m(x)) Q_m(x) + P_m(x) R_m(x)\right].
\label{eq:expanded}
\end{eqnarray}

It follows from \eqref{eq:PR_ineq},
\eqref{eq:rsevalbounds} and \eqref{eq:pnbounds} that
the expression $[\cdots]$ in \eqref{eq:expanded},
evaluated at $x = 2n$, $m = 4n$,
is bounded in absolute value by
\begin{equation}
5 e^{-4n} + e^{-4n}/n^{1/2} \le 6e^{-4n}.
\label{eq:crossbound}
\end{equation}

Next, we rewrite
\begin{equation*}
P_m(x) P_m(-x) = \sum_{i=0}^{m-1} \sum_{j=0}^{m-1} (-1)^i a_i a_j x^{-(i+j)}
\end{equation*}
as $L + U$, where
\begin{equation}
L = \sum_{k=0}^{m-1} \left( \sum_{j=0}^k (-1)^j a_j a_{k-j} \right) x^{-k}
\label{eq:lsum}
\end{equation}
and
\begin{equation}
U = \sum_{k=m}^{2m-2} \left( \sum_{j=k-(m-1)}^{m-1} (-1)^j a_j a_{k-j} \right) 
	x^{-k}.
\label{eq:usum}
\end{equation}
The ``lower'' sum $L$ is precisely $\sum_{k=0}^{m/2-1} b_k x^{-2k}$.
Replacing $k$ by $2k$ in \eqref{eq:lsum} (as the odd terms
vanish by symmetry), we have to prove
\begin{equation}
\sum_{j=0}^{2k} \frac{(-1)^j [(2j)!]^2 [(4k-2j)!]^2}{(j!)^3 [(2k-j)!]^3 32^{2k}} = \frac{[(2k)!]^3}{(k!)^4 8^{2k}}
	\,\raisedot
\label{eq:hypsum}
\end{equation}
This can be done algorithmically using the creative telescoping
approach of Wilf and Zeilberger. For example, the
implementation in the Mathematica package \texttt{HolonomicFunctions}
by Koutschan \cite{Koutschan2010} can be used.
The command
\begin{verbatim}
    a = ((2j)!)^2 / ((j!)^3 32^j);
    CreativeTelescoping[(-1)^j a (a /. j -> 2k-j),
        {S[j]-1}, S[k]]       
\end{verbatim}
outputs the recurrence equation
\begin{equation*}
(8+8 k) b_{k+1} - \left(1+6 k+12 k^2+8 k^3\right) b_k = 0
\end{equation*}
matching the right-hand side of~\eqref{eq:hypsum},
together with a telescoping certificate.
Since the summand in \eqref{eq:hypsum} vanishes
for $j < 0$ and $j > 2k$, no boundary conditions
enter into the telescoping relation,
and checking the initial value ($k = 0$)
suffices to prove the identity.\footnote%
{Curiously, the built-in
\texttt{Sum} function in Mathematica 9.0.1
computes a closed form for the sum \eqref{eq:hypsum},
but returns an answer that is wrong by a factor 2
if the factor $[(4k-2j)!]^2$ in the summand is input as $[(2(2k-j))!]^2$.}

It remains to bound the ``upper'' sum $U$ given by \eqref{eq:usum}.
The coefficients of $U = \sum_{k=m}^{2m-2} c_k x^{-k}$
can also be written as
\begin{equation}
c_k = \sum_{j=1}^{2m-k-1} (-1)^{j+k+m} a_{k-m+j} a_{m-j}.
\end{equation}
By symmetry, this sum is zero when $k$ is odd, so we only need
to consider the case of $k$ even.
We first note that, if $1 \le i < j$, then $a_i a_j \ge a_{i+1} a_{j-1}$. This
can be seen by observing that the ratio satisfies
\begin{equation}
\frac{a_i a_j}{a_{i+1} a_{j-1}} = \frac{(i+1) (2j-1)^2}{j (2i+1)^2} \ge 1.
\end{equation}
Thus, after adding the duplicated terms, $c_k$ can be written as an alternating sum in which
the terms decrease in magnitude, e.g.
\begin{equation}
-2 a_1 a_{11} + 2 a_2 a_{10} - \ldots + 2 a_5 a_7 - a_6 a_6,
\end{equation}
and its absolute value can be bounded by that of the first term, $2 a_{1+k-m} a_{m-1}$, giving
\begin{equation}
\left|\sum_{k=m}^{2m-2} \frac{c_k}{x^k} \right| \le \sum_{k=m}^{2m-2} t_k,
	\quad t_k = \frac{2 a_{1+k-m} a_{m-1}}{x^k}\,\raisedot
\end{equation}

Evaluating at $x = 2n, m = 4n$ as usual, the term ratio
\begin{equation}
\frac{t_{k+1}}{t_k} = \frac{(3+2k-8n)^2}{16n(2+k-4n)}
\end{equation}
is bounded by 1 when $4n \le k \le 8n-2$. Therefore,
using~\eqref{eq:coeffbound},
\begin{equation}
\sum_{k=m}^{2m-2} t_k \le (m-1) t_m \le e^{-4n} \frac{(4n-1)^{4n-1/2}}{2^{8n-1} n^{4n}} < e^{-4n}.
\label{eq:tbound}
\end{equation}
Adding \eqref{eq:crossbound} and \eqref{eq:tbound}, we find that
$|T_{4n}(2n)| < 7e^{-4n}$.
\end{proof}

\section{A complete error bound}	\label{sec:complete}

We are now equipped to justify Algorithm~B3.
The algorithm computes an approximation $\widetilde{\gamma}$ to $\gamma$.
Theorem~\ref{thm:Jbound} bounds the error
$|\widetilde{\gamma}-\gamma|$ in the algorithm, excluding
rounding errors and any error in the evaluation of $\ln n$.
The finite sums $S$ and $I$ approximate $S_0(2n)$ and
$I_0(2n)$ respectively, while $T$ approximates $I_0(2n)K_0(2n)$.

\begin{theorem}					\label{thm:Jbound}
Given an integer $n \ge 1$, let $N \ge 4n$ be an
integer such that
\begin{equation}
\frac{2 n^{2N} H_N}{(N!)^2} < \varepsilon_0,
\label{eq:e0bound}
\end{equation}
where
\begin{equation}
\varepsilon_0 = \frac{e^{-6n}}{(4\pi n)^{1/2} (1+H_N)}\,\raisedot
\label{eq:eps0_def}
\end{equation}
Let
\begin{equation*}
S = \sum_{k=0}^{N-1} \frac{H_k n^{2k}}{(k!)^2}\,\raisecomma
\quad I = \sum_{k=0}^{N-1} \frac{n^{2k}}{(k!)^2}\,\raisecomma
\quad T = \frac{1}{4n} \sum_{k=0}^{2n-1} 
\frac{[(2k)!]^3}{(k!)^4 8^{2k} (2n)^{2k}}\,\raisecomma
\end{equation*}
and
\begin{equation*}
\widetilde{\gamma} = \frac{S}{I} - \frac{T}{I^2} - \ln n\,.
\end{equation*}
Then
\begin{equation}
|\widetilde{\gamma} - \gamma| < 24 e^{-8n}.		\label{eq:fullbound}
\end{equation}
\end{theorem}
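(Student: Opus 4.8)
The plan is to write the exact value of $\gamma$ in exactly the algebraic shape of $\widetilde\gamma$. Abbreviating $S_0 = S_0(2n)$, $I_0 = I_0(2n)$, $K_0 = K_0(2n)$, the identity~\eqref{eq:bessel} together with $K_0/I_0 = I_0 K_0/I_0^2$ gives $\gamma = S_0/I_0 - I_0 K_0/I_0^2 - \ln n$. I would then introduce the three error quantities $\delta_S = S - S_0$, $\delta_I = I - I_0$, $\delta_T = T - I_0 K_0$, and split $\widetilde\gamma - \gamma = A - B$ with $A = S/I - S_0/I_0$ and $B = T/I^2 - I_0 K_0/I_0^2$. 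Rewriting each fraction over a common denominator gives the exact identities $A = \delta_S/I - S_0\delta_I/(I I_0)$ and $B = \delta_T/I^2 - K_0\delta_I(I_0+I)/(I^2 I_0)$, the quadratic-in-$\delta_I$ contribution being absorbed into the $(I_0+I)$ factor.

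The first task is to bound the three errors. For $\delta_T$, evaluating~\eqref{eq:ikasymp} at $x = 2n$, $m = 4n$ gives $I_0 K_0 - T = T_{4n}(2n)/(4n)$, so Lemma~\ref{lem:T_4n_bd} yields $|\delta_T| < 7 e^{-4n}/(4n)$. The quantities $\delta_S$ and $\delta_I$ are the tails $\sum_{k \ge N}$ of the series defining $S_0$ and $I_0$; once $N \ge 4n$ the ratio of consecutive terms is at most $n^2/(k+1)^2 \le 1/16$ for $I_0$ (and at most $\tfrac32 \cdot n^2/(k+1)^2 \le 1/8$ for $S_0$, using $H_{k+1}/H_k \le \tfrac32$). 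Each tail is therefore dominated by a geometric series and bounded by twice its leading term, so hypothesis~\eqref{eq:e0bound} gives $|\delta_I| < 2 n^{2N}/(N!)^2 \le \varepsilon_0$ and $|\delta_S| < 2 H_N n^{2N}/(N!)^2 < \varepsilon_0$.

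Next I would record the magnitudes needed for the denominators. The global bounds~\eqref{eq:globalbound1}--\eqref{eq:globalbound2} give $I_0 > e^{2n}/(4\pi n)^{1/2}$ and $K_0 < e^{-2n}(\pi/4n)^{1/2}$, from which $\varepsilon_0/I_0 < e^{-8n}/(1+H_N)$ by the definition~\eqref{eq:eps0_def} of $\varepsilon_0$. Since $\varepsilon_0$ is exponentially smaller than $I_0$, this forces $I = I_0 - |\delta_I| > I_0(1 - e^{-8n})$, so $I$ may be replaced by $I_0$ in every denominator at the cost of a factor $1 + O(e^{-8n})$. Finally, $S_0/I_0 = \gamma + \ln n + K_0/I_0 < 1 + \ln n + \pi e^{-4n}$, while $1 + H_N \ge 1 + \ln(4n)$, so that $(S_0/I_0)/(1+H_N) < 1$.

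Assembling the pieces, the dominant contribution comes from $\delta_T/I^2$: using $|\delta_T| < 7 e^{-4n}/(4n)$ and $I^2 > (1-O(e^{-8n}))\,e^{4n}/(4\pi n)$ gives $|\delta_T|/I^2 < 7\pi e^{-8n}(1+O(e^{-8n}))$, with $7\pi \approx 21.99$. Every other term carries an extra factor $\varepsilon_0/I_0 < e^{-8n}/(1+H_N)$ (and the $K_0$-weighted term in $B$ an additional $e^{-4n}$, which one may also control via Corollary~\ref{cor:IK_upper_bd}), so together they contribute at most about $2 e^{-8n}$; hence $|\widetilde\gamma - \gamma| < (7\pi + 2)e^{-8n} < 24 e^{-8n}$. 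The one genuinely delicate point is quantitative rather than structural: because the leading constant $7\pi$ is already close to $22$, the secondary terms and the $1 + O(e^{-8n})$ correction factors must be bounded sharply enough to keep the total below $24$, and this is exactly what the engineered normalization $\varepsilon_0 \approx e^{-8n} I_0$ in~\eqref{eq:eps0_def} secures.
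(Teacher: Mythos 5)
Your proposal is correct and takes essentially the same approach as the paper's proof: the same decomposition of $\widetilde{\gamma}-\gamma$ into the two quotient differences, the same geometric-ratio tail bounds checked against $\varepsilon_0$ via \eqref{eq:e0bound}, Lemma~\ref{lem:T_4n_bd} giving $|\delta_T| < 7e^{-4n}/(4n)$, and the global bound \eqref{eq:globalbound2} on $1/I_0(2n)$ yielding the dominant $7\pi e^{-8n}$ contribution. The differences are cosmetic --- the paper keeps $I+\varepsilon_2 = I_0$ exactly in the denominators (avoiding your $1+O(e^{-8n})$ correction factors) and bounds $S/I \le H_N$ directly where you estimate $S_0/I_0$ via \eqref{eq:bessel} --- and your accounting closes under $24e^{-8n}$ just as the paper's does.
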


\begin{proof}

Let
\begin{align*}
\varepsilon_1 = S_0(2n) - S & = \sum_{k=N}^{\infty} \frac{H_k n^{2k}}{(k!)^2}
	\,\raisecomma \\
\varepsilon_2 = I_0(2n) - I & = \sum_{k=N}^{\infty} \frac{n^{2k}}{(k!)^2}
	\,\raisedot
\end{align*}
Inspection of the term ratios for $k \ge N$ shows that
$\varepsilon_1$ and $\varepsilon_2$ are bounded by
the left side of~\eqref{eq:e0bound}.
Using~\eqref{eq:globalbound2} to bound $1/I_0(2n)$, it follows that
\begin{align*}
\left| \frac{S+\varepsilon_1}{I+\varepsilon_2} - \frac{S}{I} \right|
& = \left| \frac{\varepsilon_1 I - \varepsilon_2 S}{(I + \varepsilon_2) I} \right| \\
& \le \frac{\varepsilon_0 (I + S)}{(I + \varepsilon_2) I} \\
& = \varepsilon_0 \left( \frac{1}{I_0(2n)} \right) \left( 1 + \frac{S}{I} \right) \\
& < \frac{e^{-6n}}{(4\pi n)^{1/2} (1+H_N)} \left( \frac{(4 \pi n)^{1/2}}{e^{2n}} \right) ( 1 + H_N ) \\
& = e^{-8n}.
\end{align*}
We have $T + \varepsilon_3 = I_0(2n) K_0(2n)$ where,
from Lemma~\ref{lem:T_4n_bd},
$|\varepsilon_3| < 7e^{-4n} / (4n)$.
Thus, from Corollary~\ref{cor:IK_upper_bd}, 
\[T \le \frac{1}{2n} + \frac{7e^{-4n}}{4n} < \frac{1}{n}\,\raisedot\]
Therefore, using~\eqref{eq:globalbound2} again,
\begin{align*}
\left| \frac{T+\varepsilon_3}{(I+\varepsilon_2)^2} - \frac{T}{I^2} \right|
& = \left| \frac{\varepsilon_3 I^2 - T \varepsilon_2 (2 I + \varepsilon_2)}{(I + \varepsilon_2)^2 I^2} \right| \\
& \le \frac{|\varepsilon_3|}{(I+\varepsilon_2)^2} + T \varepsilon_2 \frac{(2I+\varepsilon_2)}{(I+\varepsilon_2)^2 I^2} \\
& \le \frac{|\varepsilon_3|}{I_0(2n)^2} + T \varepsilon_2 \frac{3}{I_0(2n)^3} \\
& < 7 \pi e^{-8n} + e^{-8n} \\
& < 23 e^{-8n}.
\end{align*}
Thus, the total error $|\widetilde{\gamma} - \gamma|$
is bounded by $e^{-8n} + 23e^{-8n} = 24 e^{-8n}$.
\end{proof}

\begin{remark}			\label{remark:1}
{\rm
We did not try to obtain the best possible constant
in~\eqref{eq:fullbound}. A more detailed analysis shows that we can
reduce the constant $24$ 
by a factor greater
than two if $n$ is large. See also Remark~\ref{remark:3}.
}
\end{remark}

Since the condition on $N$ in Theorem~\ref{thm:Jbound} is rather
complicated, we give the following corollary.
\begin{corollary}			\label{cor:simpler_bound}
Let $\alpha \approx 4.970625759544$ be the
unique positive real solution of $\alpha (\ln \alpha - 1) = 3$.
If $n \ge 138$ and $N \ge \alpha n$ are integers, then the conclusion
of Theorem~$\ref{thm:Jbound}$ holds.
\end{corollary}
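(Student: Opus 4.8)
The plan is to verify that the single nontrivial hypothesis of Theorem~\ref{thm:Jbound}, namely condition~\eqref{eq:e0bound}, is implied by $n \ge 138$ and $N \ge \alpha n$; the remaining hypotheses are automatic, since $\alpha > 4$ forces $N \ge \alpha n > 4n$. Substituting the definition~\eqref{eq:eps0_def} of $\varepsilon_0$, condition~\eqref{eq:e0bound} is equivalent to
\[
\frac{2\,n^{2N} H_N (1+H_N) (4\pi n)^{1/2}}{(N!)^2} < e^{-6n}.
\]
First I would control the factorial using the lower bound in~\eqref{eq:factorial_bounds}, which gives $(N!)^2 \ge 2\pi N^{2N+1} e^{-2N}$ and hence
\[
\frac{n^{2N}}{(N!)^2} \le \frac{1}{2\pi N}\Bigl(\frac{ne}{N}\Bigr)^{2N}.
\]
It therefore suffices to prove
\[
\frac{H_N(1+H_N)(4\pi n)^{1/2}}{\pi N}\Bigl(\frac{ne}{N}\Bigr)^{2N} < e^{-6n}.
\]

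The role of $\alpha$ is to pin down the exponential factor. Writing $\bigl(\tfrac{ne}{N}\bigr)^{2N} = \exp\bigl(2N(1+\ln n - \ln N)\bigr)$ and differentiating the exponent in $N$ yields $2\ln(n/N)$, which is negative for $N > n$; since $\alpha > 1$, the factor is decreasing on $[\alpha n,\infty)$. At $N = \alpha n$ it equals $\exp\bigl(2\alpha n(1-\ln\alpha)\bigr) = \exp\bigl(-2n\,\alpha(\ln\alpha-1)\bigr) = e^{-6n}$, using the defining relation $\alpha(\ln\alpha-1)=3$. Consequently $\bigl(\tfrac{ne}{N}\bigr)^{2N} \le e^{-6n}$ for every $N \ge \alpha n$, and after cancelling $e^{-6n}$ the task reduces to the slowly varying inequality
\[
H_N(1+H_N) < \frac{\sqrt{\pi}\,N}{2\sqrt{n}}.
\]

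To finish I would argue that the worst case over integers $N \ge \alpha n$ is the smallest admissible value $N = \lceil \alpha n\rceil$: the full left-hand side is decreasing in $N$, because the super-exponential decay of $\bigl(\tfrac{ne}{N}\bigr)^{2N}$ dominates the mild variation of $H_N(1+H_N)/N$, which is itself decreasing once $\ln N > 2$ and hence throughout the range $N \ge \alpha \cdot 138$. This leaves a one-variable inequality in $n$. Bounding $H_N \le 1 + \ln N$ with $N$ comparable to $\alpha n$, the left side grows like $(\ln n)^2$ while the right side grows like $\sqrt{n}$, so the inequality holds for all large $n$; a short calculation locates the crossover near $n \approx 215$, beyond which the clean argument applies directly.

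The main obstacle I anticipate is the boundary behaviour: the inequality is essentially tight at $n = 138$, where the two sides differ only by a few percent. The crude cancellation of $e^{-6n}$ above (replacing $\bigl(\tfrac{ne}{N}\bigr)^{2N}$ by $e^{-6n}$) discards precisely the slack that is needed there, since the prefactor $H_N(1+H_N)(4\pi n)^{1/2}/(\pi N)$ exceeds $1$ throughout the range $138 \le n \lesssim 215$. The careful version must therefore retain the strict gap $N = \lceil\alpha n\rceil > \alpha n$, estimating
\[
2N(1+\ln n - \ln N) \le -6n - 2(N-\alpha n)\ln\alpha,
\]
so that the extra negative term absorbs the logarithm of the prefactor. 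For $n$ above the crossover the prefactor drops below $1$ and no such care is required; the remaining finite window of small $n$ is then dispatched by direct numerical verification, which is also the cleanest way to confirm that $138$ is the exact integer threshold.
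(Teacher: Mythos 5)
Your proposal is correct and follows essentially the same route as the paper's proof: the Stirling lower bound $(N!)^2 \ge 2\pi N^{2N+1}e^{-2N}$, the defining relation $\alpha(\ln\alpha-1)=3$ to obtain $(ne/N)^{2N} \le e^{-6n}$ for $N \ge \alpha n$ (the paper's inequality $N^{2N} \ge e^{2N+6n}n^{2N}$), reduction to the residual inequality $H_N(1+H_N) < \sqrt{\pi}\,N/(2\sqrt{n})$ (the paper uses the Euler--Maclaurin bound $H_N \le h(N) = \ln N + \gamma + 1/(2N)$ here), an analytic argument valid for $n$ above a crossover that you correctly locate near $215$, and direct numerical verification of the window $138 \le n \le 214$, exactly as the paper does. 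One caution about your suggested refinement: the ceiling slack $N - \alpha n$ is governed by the fractional part of $\alpha n$ and can be very small in the window (about $0.023$ at $n = 171$), so the extra term $2(N-\alpha n)\ln\alpha$ cannot uniformly absorb the prefactor there --- but since you fall back on direct numerical verification for that finite range, your argument is sound.
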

\begin{proof}

For $138 \le n \le 214$
we can verify by direct computation that 
conditions~\eqref{eq:e0bound}--\eqref{eq:eps0_def}
of Theorem~\ref{thm:Jbound} hold.
Hence, in the following we assume that $n \ge 215$.
Since $N \ge \alpha n$, this implies that 
$N \ge \lceil 215\alpha \rceil = 1069$.

Let $\beta = N/n$. Then $\beta \ge \alpha$, so 
$\beta(\ln\beta - 1) \ge 3$.
Thus $2n(\beta\ln\beta - \beta - 3) \ge 0$. Taking exponentials
and using $\beta = N/n$, we obtain
\begin{equation}		\label{key_ineq}
N^{2N} \ge e^{2N+6n}n^{2N}.
\end{equation}

Define the real analytic function $h(x) := \ln x + \gamma + 1/(2x)$.
The upper bound $H_N \le h(N)$
follows from the Euler-Maclaurin expansion
\[H_N - \ln(N) - \gamma \sim \frac{1}{2N} - 
  \sum_{k=1}^\infty \frac{B_{2k}}{2k} N^{-2k},
\]
since the terms on the right-hand-side alternate in sign.

Using our assumption that $N \ge 1069$, it is easy to verify that
\begin{equation}			\label{eq:N_ineq2}
\sqrt{\pi\alpha N} \ge 2h(N)(h(N)+1).
\end{equation}
Since $\beta \ge \alpha$, it follows from~\eqref{eq:N_ineq2} that
\begin{equation}			\label{eq:N_ineq1}
\sqrt{\pi\beta N} \ge 2h(N)(h(N)+1).
\end{equation}
Substituting $\beta = N/n$ in~\eqref{eq:N_ineq1},
it follows that
\begin{equation}			\label{eq:Nn_ineq}
\pi N > 2 h(N)(h(N)+1)(\pi n)^{1/2}.
\end{equation}
Using~\eqref{key_ineq}, this gives
\begin{equation}			\label{eq:36b}
\pi N^{2N+1} > 2n^{2N}h(N)(h(N)+1)
	(\pi n)^{1/2}e^{2N+6n}.
\end{equation}

{From} the first inequality of~\eqref{eq:factorial_bounds} we have
$(N!)^2 \ge {2\pi}N^{2N+1}e^{-2N}$. 
Using this and $h(N) \ge H_N$, we see that~\eqref{eq:36b} implies
\begin{equation}				\label{eq:e0bound2}
(N!)^2 > 4n^{2N}H_N(1+H_N)(\pi n)^{1/2}e^{6n}.
\end{equation}
However, it is easy to see that~\eqref{eq:e0bound2} is equivalent
to conditions~\eqref{eq:e0bound}--\eqref{eq:eps0_def}
of Theorem~\ref{thm:Jbound}. Hence, the conclusion of
Theorem~\ref{thm:Jbound} holds.
\end{proof}

\begin{remark}			\label{remark:2}
{\rm
If $0 < n < 138$ then Corollary~$\ref{cor:simpler_bound}$ does not apply,
but a numerical computation shows that it is always sufficient to take
$N \ge \alpha n + 1$.
}
\end{remark}

\begin{remark}			\label{remark:3}
{\rm
As indicated in Table~\ref{tab:bcomparison}, the bound in \eqref{eq:fullbound}
is nearly optimal for large~$n$.
Our bound $24e^{-8n}$ appears to overestimate the true error
by a factor that grows slightly faster than order $n^{1/2}$,
which is inconsequential for high-precision computation of~$\gamma$.
}
\end{remark}

\begin{table}[ht]
\begin{center}
\begin{tabular}{ c | c | l | l }
$n$ & $N$ & \;\;\;$|\widetilde{\gamma} - \gamma|$ & \;\;\;$24e^{-8n}$\\[2pt]
\hline
&&&\\[-9pt]
10 & 50 & $7.68 \cdot 10^{-38}$ & $4.34 \cdot 10^{-34}$ \\
100 & 498 & $5.32 \cdot 10^{-349}$ & $8.81 \cdot 10^{-347}$ \\
1000 & 4971 & $1.96 \cdot 10^{-3476}$ & $1.06 \cdot 10^{-3473}$ \\
10000 & 49706 & $2.85 \cdot 10^{-34746}$ & $6.64 \cdot 10^{-34743}$
\end{tabular}
\caption{The error $|\widetilde{\gamma} -\gamma|$ 
compared to the bound \eqref{eq:fullbound}.}
\label{tab:bcomparison}
\end{center}
\end{table}

\bibliographystyle{plain}

\pagebreak[3]

\end{document}